\renewenvironment{itemize}
  {\begin{list}{$\triangleright$}{%
   \setlength{\parskip}{0mm}
   \setlength{\topsep}{2mm}
   \setlength{\rightmargin}{0mm}
   \setlength{\listparindent}{0mm}
   \setlength{\itemindent}{0mm}
   \setlength{\labelwidth}{5mm}
   \setlength{\itemsep}{2mm}
   \setlength{\parsep}{0mm}
   \setlength{\partopsep}{0mm}
   \setlength{\labelsep}{3mm}
   \setlength{\leftmargin}{8mm}
   }}{%
   \end{list}\vspace*{-1.3mm}}
\def\E{\exists}
\def\A{\forall}
\def\models{\vDash}
\def\odim{\textrm{o-dim}}
\def\kdim{\textrm{k-dim}}
\def\eqntp{{\rm eqn\textrm{-}tp}}
\def\sm{\smallsetminus}
\def\IMP{\Rightarrow}
\def\imp{\rightarrow}
\def\iff{\leftrightarrow}
\def\<{\langle}
\def\>{\rangle}
\def\0{\varnothing}
\def\theta{\vartheta}
\def\phi{\varphi}
\def\epsilon{\varepsilon}
\def\emph#1{{\boldmath\bfseries #1}}
\def\ssf#1{\textsf{\footnotesize #1}}
\newtheoremstyle{mio}
     {2\parskip}
     {0mm}
     {\sl}
     {}
     {\bfseries}
     {}
     {3mm}
     {\thmnumber{#2}\hskip2mm\thmname{#1}\thmnote{\bfseries{} #3}}
\newtheoremstyle{ex}
     {2\parskip}
     {0mm}
     {}
     {}
     {\bfseries}
     {}
     {3mm}
     {\thmnumber{#2}\hskip2mm\thmname{#1}\thmnote{\bfseries{} #3}}
\theoremstyle{mio}
\newcounter{thm}
\newtheorem{theorem}[thm]{Theorem}
\newtheorem{corollary}[thm]{Corollary}
\newtheorem{lemma}[thm]{Lemma}
\newtheorem{fact}[thm]{Fact}
\newtheorem{definition}[thm]{Definition}
\theoremstyle{ex}
\newtheorem{example}[thm]{Example}
\newtheorem{observation}[thm]{Observation}
\def\QED{\nolinebreak[4]\hfill\rlap{\ \ $\Box$}\medskip}
\renewenvironment{proof}[1][Proof]%
{\begin{trivlist}\item[\hskip\labelsep {\bf #1}]}
{\QED\end{trivlist}}
\begin{document}
\raggedbottom

\begin{center}
\begin{huge}
Krull dimension of types\\[4mm] in a class of first-order theories.
\end{huge}
\par\bigskip\bigskip\bigskip
Domenico Zambella
\par\bigskip\bigskip\bigskip
\begin{minipage}{70ex}\small
We study a class of first-order theories whose complete quantifier-free types with one free variable either have a trivial positive part or are isolated by a positive quantifier-free formula---plus a few other technical requirements. The theory of vector spaces and the theory fields are examples. We prove the amalgamation property and the existence of a model-companion. We show that the model-companion is strongly minimal. We also prove that the length of any increasing sequence of prime types is bounded, so every formula has finite Krull dimension.\bigskip

2000 Mathematics Subject Classification: 03C60
\end{minipage}
\end{center}
\bigskip
\section{Introduction}

Krull-minimal theories are defined in Definition~\ref{DT} below. The main requirement is that every complete quantifier-free type with one free variable either has a trivial positive part or it is isolated by a positive quantifier-free formula. This means that the formula $x=x$ has Krull-dimension $\le 1$, as defined in Section~\ref{noetherianity} below. 

We show that Krull-minimal theories have the amalgamation property in Theorem~\ref{AP} and that they are model-companionable in Theorem~\ref{richsaturated}.  In Corollary~\ref{stronglyminimal} we show that the model-companion of a Krull-minimal theory is a strongly minimal theory. This reproduces in general the usual aguments used to prove elimination of quantifiers for vector spaces, torsion-free divisible groups, fields, see e.g.\@ \cite{m1} and/or \cite{m2}.  In Section~\ref{noetherianity} we consider two notions of dimension. We prove that in a Krull-minimal theory the length of any increasing sequence of prime types is bounded by the maximal degree of transcendence of its solutions. So every formula has finite Krull dimension.

In the past there has been some interest in first-order theories with formulas that satisfy descending chain conditions, see \cite{ps}, \cite{j}, and references cited therein---as we found out when the final draft of this paper was ready. Our general setting is different, the most relevant difference is that we require strong properties to hold for one variable formulas and  pose no requirement on formulas with many variables. Still, a question considered in this paper is not far in spirit from a question asked in \cite{ps}. In \cite{ps} it is asked whether all $1$-equational theories are $n$-equational---see Definition~2.1 in \cite{j}. The aim of this paper is to try and understand which descending chain conditions can be deduced from good behaviour of formulas in one variable.

I wish to thank Anand Pillay for pointing out an inconsistency in a draft version of this paper and for drawing my attention to \cite{ps}. Also, I  would like to express my gratitude to Alberto Albano for valuable discussions.

\section{Krull-minimal theories}
\def\ftp{\textrm{ftp}}

Throughout this paper $T$ is a consistent theory: we shall say \emph{model\/} for \textit{model of\/} $T$, \emph{consistent\/} for consistent \textit{modulo\/} $T$, and \emph{complete\/} for complete \textit{modulo\/} $T$. The letters $M$, $N$, etc.\@ denote models and $A$, $B$, etc.\@ denote subsets of models. We assume that every model is contained in an infinite one and that $T$ fixes the \emph{characteristic of the models\/} i.e.\@ all substructures generated by the empty sets are isomorphic. In other words, $T$ is complete for quantifier-free sentences.

We say $A$-type, $A$-formula for type, respectively formula, over $A$. We write \emph{$\vdash_{\!A} q(z)$}, where $q(z)$ is an $A$-type, when $\A z\, q(z)$ holds in every model containing the substructure generated by $A$. We omit the subscript when $A$ is empty. The expression \emph{$p(z)\vdash_{\!A} q(z)$\/} abbreviates $\vdash_{\!A} p(z)\imp q(z)$. Let $a$ be a possibly infinite tuple of elements of some model and let $p(w)$ the quantifier-free type of $a$; we shall use several times without further mention that $\vdash_{\!a} q(a,z)$ is equivalent to $p(w)\vdash q(w,z)$. The notation introduced in this paragraph is less common. From next section, when we know that a model-companion of $T$ exists, one could substitute $\vdash_{\!A} q(z)$ with $U\models\A z\,q(z)$, where $U$ is some large saturated existentially-closed model of $T$. 

An \emph{equational formula\/} is a formula that contains only the connectives $\top$, $\bot$, $\wedge$ and $\vee$. An \emph{equational type\/} is a set of equational formulas. Let $p(z)$ be an $A$-type (not necessarily equational). We say that $p(z)$ is \emph{trivial over $A$\/} if $\vdash_{\!A}p(z)$. We say it is \emph{consistent over $A$\/} if $\nvdash_{\!A} \neg p(z)$, that is, if it is realized in some model containing $A$ or, in other words, if it is consistent with the quantifier-free type of $A$. We say that $p(z)$ is \emph{maximal over $A$\/} if it is consistent over $A$ and for every equational $A$-formula $\phi(z)$ either $p(z)\vdash_{\!A}\phi(z)$ or $p(z)\vdash_{\!A}\neg\phi(z)$. It is \emph{prime over $A$\/} if it is consistent over $A$ and for every pair of equational $A$-formulas $\phi(z)$ and $\psi(z)$,

\hfil$p(z)\vdash_{\!A}\phi(z)\vee\psi(z)\ \ \IMP\ \ p(z)\vdash_{\!A}\phi(z)\ \textrm{ or }\ p(z)\vdash_{\!A}\psi(z)$.

As expected, maximal implies prime. The specification `over $A$' will be dropped when $A$ is empty or clear from the context. We say that $p(z)$ is \emph{principal\/} if there is an $A$-formula $\phi(z)$ such that $\vdash_{\!A}\;p(z)\iff\phi(z)$. By compactness we can always assume that $\phi(z)$ is a conjunction of formulas in $p(z)$. When $p(z)$ is maximal, we may say \emph{isolated\/} for principal. The terminology could mislead the readers that uses ring of polynomials to guide their intuition. In fact, principal equational types correspond to finitely generated ideals in the ring of polynomials \textit{not\/} to principal ideals (which do not have an analog in our setting).

\textbf{\boldmath Throughout this paper $x$ denotes a single variable.}

\begin{definition}\label{DT}
We say that the theory $T$ is \emph{Krull-minimal\/} if for every tuple $a$,
\begin{itemize}
\item[D0] the trivial type $x=x$ is prime over $a$;
\item[D1] for every consistent equational formula $\zeta(a,x)$ there is a quantifier-free formula $\theta(z)$ such that $\vdash_{\!a}\theta(a)$ and $\zeta(b,x)$ is consistent for every $b$ such that $\vdash_{\!b}\theta(b)$;
\item[D2] every equational formula $\zeta(a,x)$ that is consistent over $a$, is consistent over any $B$ containing $a$; 
\item[D3] every non-trivial prime equational type $p(a,x)$ is maximal and principal.
\end{itemize} 
We say that $T$ is \emph{locally Krull-minimal} if these conditions hold when $a$ is a finite tuple.
\end{definition}

The heuristic is as follows: grosso modo axioms \ssf{D2} and \ssf{D3} ensure the amalgamation property of models and axiom \ssf{D1} the existence of a model-companion. Axiom \ssf{D0} is introduced for a smooth and non-trivial theory of dimension. It is only necessary for the definition of locally Krull-minimal theory in fact, when infinite tuples of parameters are allowed, it follows easily from \ssf{D1}. All results in this section are independent of \ssf{D0}.

\begin{example}
The theory of integral domains is a Krull-minimal theory. In fact, observe that equational formulas in the language of rings are systems of equations. Then \ssf{D0}, \ssf{D1} and \ssf{D2} are obvious. To prove \ssf{D3}, let $t(a,x)$ be a polynomial of minimal deree among those such that $p(a,x)\,\vdash_{\!a}\,t(a,x)=0$. The maximality of the formula $t(a,x)=0$ is an easy consequence of B\'ezout identity (in the field of fractions generated by $a$).\QED
\end{example}

\begin{example}
The theory of modules over a fixed integral domain is a Krull-minimal theory.\QED
\end{example}

The following examples show that the situation is different for locally Krull-minimal theories.

\begin{example}
Consider a language that contains only a binary relation $r(x,y)$. Let $T_{\sf a}$, $T_{\sf b}$, and $T_{\sf c}$ be the theories that axiomatize the models that are
\begin{itemize}
\item[a] disjoint union of tournaments and, respectively,
\item[b] disjoint union of dense linearly ordered sets without endpoints. 
\item[c] disjoint union of tournaments and linearly ordered sets. 
\end{itemize}
It is easy to check that these are locally Krull-minimal theories. Observe that $T_{\sf a}$ is an unstable simple theory, $T_{\sf b}$ is an unstable theory without the independence property, and  $T_{\sf c}$ is neither.\QED
\end{example}

Only one of the requirements in Definition~\ref{DT} may fail in a locally Krull-minimal theory for some infinite tuple of parameters $a$: some non-trivial prime equational type $p(a,x)$ is non-isolated. In fact, infinite tuples of parameters are irrelevant for \ssf{D0}, \ssf{D1} and \ssf{D3} while the observation below ensure that first claim in \ssf{D3} holds also for infinite tuples of parameters.

\begin{observation}\label{notstrongly}
Let $T$ be a locally Krull-minimal theory and let $p(x)$ be a non-trivial prime equational $A$-type, for some possibly infinite set of parameters $A$. Then, by compactness, $p(x)$ is maximal.\QED
\end{observation}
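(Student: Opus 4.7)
The plan is to reduce maximality over the infinite tuple $A$ to maximality over a carefully chosen finite subtuple, where axiom \ssf{D3} applies directly. Fix an equational $A$-formula $\phi(x)$; we must show that $p\vdash_{\!A}\phi$ or $p\vdash_{\!A}\neg\phi$. I would first pick a finite $a\subseteq A$ containing both the parameters of $\phi$ and the parameters of one formula $\zeta_0(x)\in p$ with $\nvdash_{\!A}\zeta_0$. Such a $\zeta_0$ exists precisely because $p$ is non-trivial, and this enlargement is the key bookkeeping step.

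The next step is to introduce the restriction
$$q(x)\ =\ \{\eta(x):\eta\text{ is an equational $a$-formula and }p\vdash_{\!A}\eta\}$$
and check three things: $q$ is consistent, non-trivial, and prime over $a$. Consistency is immediate, since any realisation of $p$ in a model extending $A$ also realises $q$. Non-triviality is witnessed by $\zeta_0\in q$, using that $\vdash_{\!a}\zeta_0$ would imply $\vdash_{\!A}\zeta_0$. Primeness over $a$ transfers from primeness of $p$ over $A$ by a compactness argument: if $q\vdash_{\!a}\psi_1\vee\psi_2$ then a finite conjunction of elements of $q$ implies $\psi_1\vee\psi_2$ over $a$, hence over $A$, and since $p$ proves each of these conjuncts over $A$, primeness of $p$ picks one of the $\psi_j$, which then lies in $q$.

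With these verifications in hand, \ssf{D3} applied to the finite tuple $a$ declares $q$ maximal, so either $q\vdash_{\!a}\phi$ or $q\vdash_{\!a}\neg\phi$. A final compactness step transfers the verdict back: whichever of $\phi$ or $\neg\phi$ is implied by $q$ is implied by a finite conjunction of elements of $q$, and each such element is in turn implied by $p$ over $A$. I expect the only real obstacle to be the non-triviality of $q$: without folding the parameters of a non-trivial formula of $p$ into $a$, the restriction $q$ could in principle collapse to the trivial $a$-type, and \ssf{D3} would not apply. Everything else is essentially a routine unfolding of compactness and the definitions.
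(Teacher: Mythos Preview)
Your argument is correct and is precisely the compactness reduction the paper has in mind: the paper's ``proof'' is literally the phrase ``by compactness'' followed by \QED, and what you have written is the natural unpacking of that hint---restrict to a finite subtuple large enough to see both non-triviality and the test formula, apply \ssf{D3} there, and push the verdict back up. Your care in enlarging $a$ to include the parameters of a non-trivial $\zeta_0$ is exactly the point that makes the reduction go through.
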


We will often think of prime equational types as the positive part of a maximal quantifier-free type: this is precisely stated in point \ssf{c} of the following observation. We need some notation: for every type $p(z)$ we define

\hspace*{14ex}\llap{\emph{$p^\bullet(z)$}}$\ \ :=\ \ \Big\{\,\neg\xi(z)\ \ :\  \ \xi(z)\textrm{ is an equational }A\textrm{-formula and }  p(z)\nvdash_{\!A}\xi(z)\,\Big\}$,

\hspace*{14ex}\llap{\emph{$p^\circ(z)$}}$\ \ :=\ \ \Big\{\,\phantom{\neg}\xi(z)\ \ :\ \  \xi(z)\textrm{ is an equational }A\textrm{-formula and }  p(z)\vdash_{\!A}\xi(z)\,\Big\}$.

Note the dependency on $A$, however, as $A$ will be always clear from the context, we omit it from the notation. When $p(z)$ is trivial then $p^\bullet(z)$ is called the \emph{transcendental $A$-type\/} and is denoted by \emph{$o(z/A)$}. In general, it need not be a consistent type but, when it is consistent, it is also maximal. The following type is called the \emph{equational type of $b$ over $A$}:

\hspace*{14ex}\llap{\emph{$\eqntp(b/A)$}}$\ \ :=\ \ \Big\{\,\,\phantom{\neg}\xi(z)\ \ :\ \ \ \xi(z)\textrm{ is an equational }A\textrm{-formula and }\vdash_{\!A,b}\xi(b)\,\Big\}$,

As usual, when $A$ is empty we omit if from the notation.

\begin{fact}\label{observation*} Let $p(z)$ be any $A$-type. The following are equivalent:
\begin{itemize}
\item[a] $p(z)$ is prime over $A$;
\item[b] $p^\bullet(z)\,\wedge\, p(z)$ is consistent, and consequently maximal, over $A$.
\item[c] $p^\circ(z)=\eqntp(b/A)$ for some $b$.
\end{itemize}
\end{fact}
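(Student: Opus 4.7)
The plan is to prove the cycle (a)$\IMP$(b)$\IMP$(c)$\IMP$(a). The key technical remark to keep in mind throughout is that equational formulas are quantifier-free: for any $b$ in a model containing $A$, whether $\xi(b)$ holds for an equational $A$-formula $\xi$ is determined by the substructure $\<A,b\>$, so $\vdash_{\!A,b}\xi(b)$ is equivalent to $\xi(b)$ holding in one---equivalently in every---model containing $A\cup\{b\}$. Without this remark the disjunction step in the last implication would require a separate argument; this is where I expect the one genuine subtlety to lie.

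For (a)$\IMP$(b), suppose toward contradiction that $p^\bullet(z)\wedge p(z)$ is inconsistent over $A$. By compactness there are $\neg\xi_1(z),\dots,\neg\xi_n(z)\in p^\bullet(z)$ and finitely many formulas in $p(z)$ whose conjunction together with the $\neg\xi_i$ is inconsistent over $A$. This yields $p(z)\vdash_{\!A}\xi_1(z)\vee\cdots\vee\xi_n(z)$; iterating the primality of $p$ produces some $i$ with $p(z)\vdash_{\!A}\xi_i(z)$, contradicting $\neg\xi_i(z)\in p^\bullet(z)$. Maximality of $p^\bullet\wedge p$ is then immediate by cases: for any equational $\xi$, either $p\vdash_{\!A}\xi$ and hence $p^\bullet\wedge p\vdash_{\!A}\xi$, or $p\nvdash_{\!A}\xi$ and then $\neg\xi\in p^\bullet$.

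For (b)$\IMP$(c), pick any $b$ realizing $p^\bullet\wedge p$ in some model containing $A$, and I claim $p^\circ(z)=\eqntp(b/A)$. If $\xi\in p^\circ$ then $\xi(b)$ holds because $b$ realizes $p$, so by the opening remark $\xi\in\eqntp(b/A)$. Conversely, if $\xi\notin p^\circ$ then $\neg\xi\in p^\bullet$, so $\neg\xi(b)$ holds and $\xi\notin\eqntp(b/A)$.

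For (c)$\IMP$(a), note that $\bot\notin\eqntp(b/A)$ because $b$ sits inside some model, hence $\bot\notin p^\circ$ and $p$ is consistent over $A$. To see that $p$ is prime, suppose $p(z)\vdash_{\!A}\phi(z)\vee\psi(z)$ with $\phi,\psi$ equational. Then $\phi\vee\psi\in p^\circ=\eqntp(b/A)$, so $\phi(b)\vee\psi(b)$ holds in the model containing $b$; whichever disjunct is true is, by the opening remark, already in $\eqntp(b/A)=p^\circ$, yielding $p\vdash_{\!A}\phi$ or $p\vdash_{\!A}\psi$.
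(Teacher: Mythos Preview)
Your proof is correct and follows essentially the same route as the paper's: the paper dispatches \ssf{a}$\IMP$\ssf{b}$\IMP$\ssf{c} with a one-line appeal to compactness and then spells out \ssf{c}$\IMP$\ssf{a} exactly as you do, with the same emphasis on the absoluteness of equational formulas between models containing $\<A,b\>$. Your write-up simply makes the compactness steps explicit where the paper leaves them to the reader.
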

\begin{proof}
The implications $\ssf{a}\IMP\ssf{b}\IMP\ssf{c}$ are clear by compactness. The implication $\ssf{c}\IMP\ssf{a}$ amounts to claiming that $p^\circ(z)=\eqntp(b/A)$ is prime over $A$. Suppose $p^\circ(z)\vdash_{\!A}\phi(z)\vee\psi(z)$ for some equational $A$-formulas $\phi(z)$ and $\psi(z)$. Let $M$ be any model containing $A,b$, then $M\models\phi(b)\vee\psi(b)$, so $M\models\phi(b)$ or $M\models\psi(b)$. Observe that if $\phi(z)$ is an equational $A$-formula and $M\models\phi(b)$ for \textit{some\/} model $M$ containing $A,b$ then $M\models\phi(b)$ for \textit{all\/} models $M$ containing $A,b$ and $\phi(z)$ belongs to $\eqntp(b/A)$. For this it is essential to recall that by `containing' $A,b$ we understand `containing the substructure generated by' $A,b$. Then $p^\circ(z)\vdash_{\!A}\phi(z)$ or $p^\circ(z)\vdash_{\!A}\psi(z)$.
\end{proof}

\begin{observation}\label{observation2}
We can rephrase condition \ssf{D3} above as follows: for every complete quantifier-free type $p(a,x)$ either $p^\circ(a,x)$ is trivial or there is an equational formula $\xi(z,x)$ such that $\vdash_{\!a}\,\xi(a,x)\,\iff\,p(a,x)$.\QED
\end{observation}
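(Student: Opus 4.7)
The plan is to prove the rephrasing as a straight equivalence with \ssf{D3}, using Fact~\ref{observation*} to pass back and forth between prime equational types and complete quantifier-free types. The whole argument is essentially bookkeeping: in a complete quantifier-free type $p(a,x)$, the positive part $p^\circ(a,x)$ is automatically a prime equational type, and conversely every prime equational type is the positive part of some complete quantifier-free type. Once this correspondence is made explicit, \ssf{D3} transports to a statement about isolating $p$ by an equational formula.

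For the forward direction, I assume \ssf{D3} and fix a complete quantifier-free type $p(a,x)$. Let $b$ realize $p$; then $p^\circ(a,x)=\eqntp(b/a)$, and by Fact~\ref{observation*}~\ssf{c} this is prime over $a$. If $p^\circ(a,x)$ is non-trivial, \ssf{D3} gives an equational formula $\xi(a,x)$ with $\vdash_{\!a}\xi(a,x)\iff p^\circ(a,x)$ and also tells us $p^\circ(a,x)$ is maximal. The key step is to upgrade the equivalence from $p^\circ$ to the full type $p$. For any equational $a$-formula $\phi(a,x)$, maximality forces $p^\circ\vdash_{\!a}\phi$ or $p^\circ\vdash_{\!a}\neg\phi$; in the first case $\phi\in p$, in the second $\neg\phi\in p^\bullet\subseteq p$. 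Hence $p^\circ(a,x)$ already decides every equational formula in $p(a,x)$, so $\vdash_{\!a}p^\circ(a,x)\iff p(a,x)$ and consequently $\vdash_{\!a}\xi(a,x)\iff p(a,x)$.

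For the converse, I assume the rephrasing and take any non-trivial prime equational type $q(a,x)$. By Fact~\ref{observation*}~\ssf{b}, $q(a,x)\cup q^\bullet(a,x)$ is consistent; extend it to a complete quantifier-free type $p(a,x)$. A short check shows $p^\circ(a,x)$ and $q(a,x)$ are equivalent modulo $\vdash_{\!a}$: one inclusion is $q\subseteq p$, and for the other, if $\xi$ is equational and $q\nvdash_{\!a}\xi$ then $\neg\xi\in q^\bullet\subseteq p$, so $\xi\notin p^\circ$. Since $q$ is non-trivial, so is $p^\circ$, and the rephrasing supplies an equational $\xi(a,x)$ with $\vdash_{\!a}\xi(a,x)\iff p(a,x)$; via the equivalence $p\equiv_{\!a} p^\circ\equiv_{\!a} q$ this shows $q$ is principal. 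Maximality then follows because $\xi$, being equational, decides every other equational formula once its truth is fixed: for any equational $\phi(a,x)$, either $\phi\in p$ and $q\vdash_{\!a}\phi$, or $\neg\phi\in p^\bullet\subseteq p$ and $q\vdash_{\!a}\neg\phi$.

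I do not expect any real obstacle; the only place requiring attention is verifying that $p^\circ$ and $q$ coincide in the converse, since this is what makes the rephrasing faithfully recover the original \ssf{D3} and not something weaker.
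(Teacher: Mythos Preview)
Your argument is correct. The paper itself gives no proof for this observation---it is stated with an immediate \QED---so your detailed verification via Fact~\ref{observation*} is simply an explicit unpacking of what the author treats as routine. One small remark: in the converse direction the equivalence $p\equiv_{\!a} p^\circ$ is only available \emph{after} you have obtained the equational $\xi$ with $\vdash_{\!a}\xi\iff p$ (since $\xi\in p^\circ$ is what forces $p^\circ\vdash_{\!a} p$); your write-up uses it in the right order, but it may be worth flagging that dependence explicitly.
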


Prime types play the role of complete types when we restrict the attention to equational formulas. Precisely, we have the following fact.

\begin{fact}\label{juisn}
Let $q(z)$ be an equational $A$-type and let $P$ be the set of prime equational $A$-types $p(z)$ such that $p(z)\vdash_{\!A} q(z)$. Then

\hfil$\displaystyle\vdash_{\!A}\ q(z)\ \iff \bigvee_{p(z)\in P}p(z)$,

where if $P$ is empty the disjunction is $\bot$.
\end{fact}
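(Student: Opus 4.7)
The plan is to establish the biconditional semantically, one realization at a time: for every tuple $b$ in every model containing $A$, I will show that $b$ realizes $q(z)$ if and only if $b$ realizes some $p(z)\in P$. The direction $\PMI$ is immediate from the definition of $P$, since each $p\in P$ satisfies $p(z)\vdash_{\!A}q(z)$, so every realization of $p$ is a realization of $q$.

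For $\IMP$, given a realization $b$ of $q$ in some model containing $A$, I would take as witness the type $p(z):=\eqntp(b/A)$ itself. It is consistent (realized by $b$) and it contains every formula of $q(z)$ (since $b$ realizes them), hence $p\vdash_{\!A}q$. To see that $p$ is prime I would invoke Fact~\ref{observation*}: the type $p$ already consists of equational formulas and is closed under $A$-entailment within the equational fragment (if $p\vdash_{\!A}\xi(z)$ with $\xi$ equational, then $\xi(b)$ holds in every model containing $A,b$, so $\xi\in\eqntp(b/A)=p$), whence $p^\circ(z)=p(z)=\eqntp(b/A)$. This is precisely the hypothesis of clause \ssf{c}, which yields primality via clause \ssf{a}. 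Hence $p\in P$ is a witness realized by $b$.

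The edge case $P=\emptyset$ falls out of the same argument: were some $b$ to realize $q$, the preceding paragraph would produce $\eqntp(b/A)\in P$, a contradiction. So $q$ must be unrealizable in every model containing $A$, and both sides of the claimed equivalence are uniformly false, consistent with the convention $\bigvee_\emptyset=\bot$.

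I do not anticipate any real obstacle: the substance of the argument is already encapsulated in Fact~\ref{observation*}, and the only new observation required is to recognize $\eqntp(b/A)$ as the natural witness prime type for the forward direction. The proof amounts to little more than unpacking definitions and applying that fact.
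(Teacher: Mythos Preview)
Your proposal is correct and follows essentially the same approach as the paper, which simply notes that the fact follows from clause~\ssf{c} of Fact~\ref{observation*}. You have spelled out explicitly what the paper leaves implicit: that $\eqntp(b/A)$ serves as the witnessing prime type for any realization $b$ of $q(z)$.
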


\begin{proof}
It follows from \ssf{c} of Fact~\ref{observation*}.
\end{proof}

\begin{fact}\label{nonprime}
Let $T$ be a Krull-minimal theory and let $A$ be an arbitrary set of parameters (alternatively: $T$ locally Krull-minimal and $A$ finite). Let $p(x)$ be a non-trivial equational $A$-type. Then there are some equational $A$-formulas $\xi_1(x),\dots,\xi_n(x)$ that are maximal and such that

\hfil$\displaystyle\vdash_{\!A}\ p(x)\ \iff \bigvee^n_{i=1}\xi_i(x)$.

\end{fact}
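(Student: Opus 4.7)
My plan is to combine Fact~\ref{juisn} with axiom \ssf{D3} to rewrite $p(x)$ as an (a priori infinite) disjunction of principal maximal equational types, and then use compactness to cut this down to a finite disjunction. First, I apply Fact~\ref{juisn} to $p(x)$ to obtain $\vdash_{\!A}\ p(x)\iff\bigvee_{p'(x)\in P}p'(x)$, where $P$ is the family of prime equational $A$-types $p'(x)$ with $p'(x)\vdash_{\!A}p(x)$. Since $p(x)$ is non-trivial over $A$, no $p'(x)\in P$ can be trivial either: a trivial $p'(x)$ together with $p'(x)\vdash_{\!A}p(x)$ would force $\vdash_{\!A}p(x)$.

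Under the hypothesis---$T$ Krull-minimal for arbitrary $A$, or $T$ locally Krull-minimal for finite $A$---axiom \ssf{D3} applies to every $p'(x)\in P$ and yields an equational $A$-formula $\xi_{p'}(x)$ with $\vdash_{\!A}\ p'(x)\iff\xi_{p'}(x)$; since $p'(x)$ is maximal, the single-formula type $\{\xi_{p'}(x)\}$ is itself a maximal equational $A$-type. Substituting into the equivalence above gives $\vdash_{\!A}\ p(x)\iff\bigvee_{p'\in P}\xi_{p'}(x)$.

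To pass to a finite disjunction, I would argue by compactness: the set $p(x)\cup\{\neg\xi_{p'}(x):p'\in P\}$ is inconsistent over $A$, since any realization would contradict the displayed equivalence. Hence a finite subset is already inconsistent, producing $\xi_1,\dots,\xi_n\in\{\xi_{p'}:p'\in P\}$ with $p(x)\vdash_{\!A}\bigvee_{i=1}^n\xi_i(x)$. The converse direction is immediate: each $\xi_i(x)$ is $A$-equivalent to some $p'_i(x)\in P$, which in turn $A$-entails $p(x)$.

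No step is genuinely hard. The only delicate point is that invoking \ssf{D3} requires exactly the hypothesis of the statement; in the locally Krull-minimal parenthetical case, isolation of non-trivial prime types can fail when $A$ is infinite, and this is precisely why $A$ is restricted to be finite there.
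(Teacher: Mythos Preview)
Your proof is correct and follows exactly the approach indicated in the paper, which simply cites Fact~\ref{juisn}, axiom \ssf{D3}, and compactness; you have merely unpacked these three ingredients in detail. The observation that non-triviality of $p(x)$ forces non-triviality of each $p'(x)\in P$, and your remark about why the finiteness of $A$ is needed in the locally Krull-minimal case, are both accurate.
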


\begin{proof}
By Fact~\ref{juisn}, axiom~\ssf{D3}, and compactness.
\end{proof}

\begin{theorem}\label{minimal}
Let $T$ be a Krull-minimal theory and let $\phi(x)$ be a non-trivial equational $A$-formula. Then $\vdash_{\!A}\E^{<n}x\,\phi(x)$ for some $n$.
\end{theorem}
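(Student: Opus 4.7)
The plan is to first apply Fact~\ref{nonprime} to write $\vdash_A\phi(x)\iff\bigvee_{i=1}^m \xi_i(x)$ with each $\xi_i$ maximal equational; since a uniform bound on the realizations of each $\xi_i$ yields one for $\phi$, I may assume $\phi$ is itself maximal.

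Suppose for contradiction that in some model $M\supseteq A$ the formula $\phi$ has infinitely many distinct realizations $(b_i)_{i\in\N}$, and set $B:=A\cup\{b_i:i\in\N\}$. I first check that $\phi$ is non-trivial over $B$. If it were trivial there, every element of $M$ would realize $\phi$; in particular, any element $c$ of the substructure generated by $A$ (which lies in $M$) would realize $\phi$. By Observation~\ref{observation2}, $\phi$ isolates the complete quantifier-free type over $A$ of any of its realizations, so the $A$-equational formula $x=c$ (writing $c$ as a term in $A$) would belong to this complete QF type, forcing $\vdash_A\phi(x)\imp x=c$ and making $c$ the unique realization of $\phi$---contradicting the infinity of the $b_i$. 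Thus $\phi$ is non-trivial over $B$.

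Now apply Fact~\ref{nonprime} once more to obtain $\vdash_B\phi(x)\iff\bigvee_{j=1}^{k}\eta_j(x)$ with $k$ finite and each $\eta_j$ a maximal equational $B$-formula. For each $i$, the formula $x=b_i$ is a maximal equational $B$-formula isolating $\eqntp(b_i/B)$; since the unique disjunct $\eta_{j(i)}$ realized by $b_i$ is also maximal, it must be equivalent over $B$ to $x=b_i$. Distinct $b_i$'s thus yield distinct $\eta_j$'s, contradicting the finiteness of $k$. Once the realizations of $\phi$ are known to be finite in every model $M\supseteq A$, a routine compactness argument yields the desired uniform bound $n$.

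The main obstacle is the non-triviality check: the argument above uses that the substructure generated by $A$ is non-empty, which holds whenever $A\neq\emptyset$ or the language contains a constant symbol; the residual edge case of an empty substructure generated by $\emptyset$ would need a separate treatment. Some care is also required in the preliminary compactness step to ensure that ``infinitely many realizations'' can indeed be witnessed in a single model.
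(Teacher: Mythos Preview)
Your argument is essentially the paper's: enlarge the parameter set to include the infinite family of realizations, invoke Fact~\ref{nonprime} over that set, and derive a contradiction from the fact that each maximal disjunct, being realized by some $b_i\in B$, must imply $x=b_i$. The paper omits your preliminary reduction to a maximal $\phi$ over $A$ and does not pause to check that $\phi$ remains non-trivial over $A\cup B$; your step~3 is thus more scrupulous than the original, and the residual edge case you flag can be dispatched uniformly by amalgamation (Theorem~\ref{AP}), since any quantifier-free formula consistent over $A$ stays consistent over any $B\supseteq A$.
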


\begin{proof}
Suppose for a contradiction that $\nvdash_{\!A}\phi(x)$ and that there is an infinite set $B$ such that $\vdash_{\!A,b}\phi(b)$ for every $b\in B$. Apply Fact~\ref{nonprime} to obtain

\hfil $\displaystyle\vdash_{\!A,B}\  \phi(x)\ \iff\ \bigvee^n_{i=1}\xi_i(x)$,

for some equational $A,B$-formulas $\xi_1(x),\dots,\xi_n(x)$ that are maximal over $A,B$. One of these formulas, say $\xi_i(x)$, is satisfied by two distinct elements in $B$, say $b_1$ and $b_2$. But $\xi_i(x)$ is maximal so it implies both $x=b_1$ and $x=b_1$, a contradiction.
\end{proof}

\begin{theorem}\label{AP}
Let $T$ be a locally Krull-minimal theory. The class of models of $T$ has the amalgamation property (over sets).
\end{theorem}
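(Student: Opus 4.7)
My plan is a compactness reduction to a one-element extension problem, handled in two cases according to the equational type.

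Given models $M_1, M_2$ containing a common set $A$, amalgamation over $A$ is equivalent, by compactness, to simultaneously realizing the quantifier-free types of any two finite tuples $\bar d\subseteq M_1$ and $\bar e\subseteq M_2$ that share a finite part $\bar a\subseteq A$. I would proceed by induction on $|\bar e|$, handling one element at a time. At stage $k$, the finite substructure $S:=\bar a\cup\bar d\cup\{e'_1,\ldots,e'_{k-1}\}$ sits inside some model of $T$, and I seek an element $e'_k$ in an extension of $S$ whose quantifier-free type over $c':=\bar a\cup\{e'_1,\ldots,e'_{k-1}\}$ matches the quantifier-free type $p(c,x)$ of $b:=e_k$ over the corresponding $c\subseteq M_2$.

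I would then split according to the positive part $p^\circ=\eqntp(b/c)$. In Case A, $p^\circ$ is non-trivial; Observation~\ref{observation2} (restating \ssf{D3}) supplies an equational $c$-formula $\xi(c,x)$ that isolates $p$, i.e.\ $\vdash_{\!c}\xi(c,x)\iff p(c,x)$. Since $b$ witnesses the consistency of $\xi$ over $c$, axiom \ssf{D2} transfers this consistency to the larger finite set $S$, producing $e'_k$ in an extension of $S$ satisfying $\xi$; the isolating property then guarantees that $e'_k$ realizes $p(c',x)$ over $c'$.

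In Case B, $p^\circ$ is trivial, so $p(c',x)$ equals the transcendental type $o(x/c')$. My strategy is to take $e'_k$ transcendental over the whole finite set $S$: axiom \ssf{D0} (legitimate here because $S$ is finite) asserts that $x=x$ is prime over $S$, and \ssf{b} of Fact~\ref{observation*} then yields the consistency of $o(x/S)$ over $S$, so some $e'_k$ in an extension of $S$ has trivial equational type over $S$. The main obstacle is then to show that this forces trivial equational type over the smaller $c'$---equivalently, that any equational $c'$-formula non-trivial over $c'$ remains non-trivial over the enlarged set $S$. I plan to close this gap by decomposing non-trivial equational $c'$-formulas via Fact~\ref{nonprime} into disjunctions of maximal equational formulas, and then exploiting axiom \ssf{D2} together with the absoluteness of equational satisfaction between substructures and models, much in the spirit of the bounded-solutions argument underlying Theorem~\ref{minimal}, to propagate non-triviality from $c'$ to $S$ and complete the induction.
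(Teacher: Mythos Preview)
Your plan is essentially the paper's own argument: reduce to a one-element extension, split on whether the equational type of the new element is trivial, and in the non-trivial case use \ssf{D3} for isolation and \ssf{D2} to transport consistency. The only cosmetic difference is that you first pass to finite tuples by compactness, whereas the paper enumerates $A$ by a possibly infinite tuple and invokes Observation~\ref{notstrongly} to keep maximality; both are fine. In the trivial case the paper is terser than you: it simply takes $b'$ realizing $o(x/N)$, says ``consistent by \ssf{D0}'', and stops --- it never addresses your ``main obstacle'' (why a realization of $o(x/N)$, respectively $o(x/S)$, must realize $o(x/A)$, respectively $o(x/c')$). So you have put your finger on a point the paper glosses over.

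Your sketched patch, however, does not close the gap as stated. Fact~\ref{nonprime} and \ssf{D2} together with downward absoluteness of equational formulas do handle the sub-case where every element of $\bar d$ is algebraic over the current base: if $d$ has maximal isolating formula $\xi(y)$ and $\vdash_{c'',d}\phi(x)$, then $\xi(y)\vdash_{c''}\phi(x)$; any $M'\supseteq c''$ extends (by \ssf{D2}) to some $M''$ realizing $\xi$, whence $M''\models\forall x\,\phi(x)$ and, by absoluteness, $M'\models\forall x\,\phi(x)$. But when the added element $d$ is transcendental over $c''$, the bounded-solutions heuristic of Theorem~\ref{minimal} is unavailable --- its proof applies Fact~\ref{nonprime} over an \emph{infinite} parameter set and so genuinely needs full Krull-minimality. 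The correct move here is cheaper: from $\vdash_{c'',d}\phi(x)$ compactness gives $\vdash_{c''}\psi(y)\vee\phi(x)$ with $\psi$ equational and non-trivial over $c''$; substituting $y:=x$ yields $\vdash_{c''}\psi(x)\vee\phi(x)$, and now \ssf{D0} over the finite set $c''$ forces $\vdash_{c''}\phi(x)$. Inducting on $|S\smallsetminus c'|$ with these two sub-cases completes Case~B.
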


\begin{proof}
Let $A=M\cap N$ be a substructure of both $M$ and $N$. We show that there is a model $N'$ containing $N$ and an embedding $f':M\imp N'$ that fixes $A$.  Clearly it suffices to show that for any element $b\in M$ there are a model $N'$ containing $N$ and some $b'\in N'$ such that $N,b\equiv_{A,\rm qf} N',b'$. Let $a$ be a tuple that enumerates $A$. Let $p(z,x)=\eqntp(a,b)$, which is prime by Fact~\ref{observation*}.\ssf{b}. Assume first that $p(a,x)$ is trivial. Then $b$ realizes $o(x/A)$ in $M$. Take any $N',b'$ realizing $o(x/N)$, which is consistent by \ssf{D0}. Now assume instead that $p(a,x)$ is non-trivial. By \ssf{D2}, the type $p(a,x)$ is consistent over $N$, so there is an $N'$ containing $N$ and $b'\in N'$ realizing $p(a,x)$. By Observation~\ref{notstrongly}, $p(a,x)$ is maximal over $A$, so $N,b\equiv_{A,\rm qf} N',b'$.
\end{proof}

\begin{theorem}\label{richsaturated}
Let $T$ be a locally Krull-minimal theory. Then $T$ has a model-companion that admits elimination of quantifiers.
\end{theorem}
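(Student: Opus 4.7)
The plan is to take $T^*$ as the theory of the existentially closed models of $T$---those $M\models T$ in which every quantifier-free formula with parameters in $M$ realized in some extension of $M$ is already realized in $M$. I would show (1) every $M\models T$ embeds in an existentially closed model, (2) the class of existentially closed models is first-order axiomatizable, and (3) the resulting theory $T^*$ admits quantifier elimination, whence it is the model-companion of $T$.

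For (1), the standard chain construction works: given $M$, iteratively build an extension $M_{n+1}\supseteq M_n$ realizing every quantifier-free $\phi(a,x)$ with $a\in M_n$ consistent over $a$. Amalgamation (Theorem~\ref{AP}) ensures each such $\phi$ remains consistent over $M_n$, and by compactness all of them are realized simultaneously in a single extension; the union of the chain is existentially closed.

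For (2), the input is \ssf{D1} together with the structure theorem Fact~\ref{nonprime}. A quantifier-free $\phi(z,x)$ is a finite disjunction $\bigvee_i\bigl(\zeta_i(z,x)\land\bigwedge_j\neg\eta_{i,j}(z,x)\bigr)$ with $\zeta_i,\eta_{i,j}$ equational. Consistency of a case over $b$ bifurcates: either $\zeta_i$ is trivial over $b$ and every $\eta_{i,j}$ non-trivial over $b$, so that a realization of $o(x/b)$---consistent by \ssf{D0}---witnesses the case; or $\zeta_i$ is non-trivial and, by Fact~\ref{nonprime}, equivalent to a finite disjunction of principal maximal equationals $\bigvee_k\xi_k$ one of which is consistent over $b$ (witnessed by \ssf{D1}) and, by maximality (\ssf{D3}), implies every $\neg\eta_{i,j}$. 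Each of these conditions on $b$ is captured by a quantifier-free $\theta(z)$ via \ssf{D1} together with the principal form of the $\xi_k$'s, and $T^*$ is axiomatized by the scheme $\A z\,(\theta(z)\imp\E x\,\phi(z,x))$ ranging over all such admissible $(\phi,\theta)$.

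For (3) I apply Robinson's test: let $M_1,M_2\models T^*$ share a substructure $A$ via $f:A\to M_2$, and assume $M_2$ is $|M_1|^+$-saturated (else replace it by a saturated elementary extension). For any $b\in M_1$ write $p(a,x)=\eqntp(a,b)$ with $a$ enumerating $A$. If $p$ is non-trivial, \ssf{D3} gives an isolating equational $\xi(a,x)$; by \ssf{D2}, $\xi(f(a),x)$ is consistent over $f(A)$, realized in $M_2$ by existential closure, and Observation~\ref{notstrongly} upgrades the match of equational types to full quantifier-free types. If $p$ is trivial, $b$ realizes $o(x/A)$; each finite fragment $\bigwedge_i\neg\xi_i(f(a),x)$ with the $\xi_i$ non-trivial over $A$ is realized in $M_2$ because a realization of $o(x/a^*)$---consistent by \ssf{D0} for a finite $a^*\subseteq A$ containing all relevant parameters---witnesses the fragment, amalgamation (Theorem~\ref{AP}) transfers the realization into an extension of $M_2$, and existential closure places a witness in $M_2$ itself. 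Saturation of $M_2$ then yields a full realization $b'$.

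The principal obstacle is step~(2), where the combinatorial case analysis must produce an explicit family of quantifier-free defining conditions. The hinge is that in Krull-minimal theories principal maximal equationals act as ``atoms'' by \ssf{D3}, so relations like ``$\xi_k\vdash\eta_{i,j}$'' translate to quantifier-free conditions on the parameters; combined with \ssf{D1} for consistency, this supplies the required $\theta$'s. Once the axiomatization is pinned down, the chain construction and Robinson's test are routine.
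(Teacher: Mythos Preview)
Your step (2) is where the argument breaks down. The decomposition of $\zeta_i(b,x)$ into maximal formulas $\xi_k$ supplied by Fact~\ref{nonprime} is carried out \emph{over the specific parameter tuple $b$}: the formulas $\xi_k$ are $b$-formulas, their number and shape vary with $b$, and there is no uniformity claim in that fact. So you cannot insert the $\xi_k$ into a parameter-free axiom scheme. More seriously, the conditions you need on $b$---``$\zeta_i(b,x)$ is trivial over $b$'', ``$\xi_k(b,x)$ is maximal over $b$'', ``$\xi_k(b,x)\vdash_b\eta_{i,j}(b,x)$''---are nowhere shown to be quantifier-free definable in $b$. Axiom \ssf{D1} only provides, for each \emph{fixed} consistent equational $\zeta(a,x)$, a quantifier-free $\theta$ giving a \emph{sufficient} condition for consistency; it does not characterise the locus of consistency, let alone the loci of triviality, maximality, or entailment. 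Your sentence ``relations like $\xi_k\vdash\eta_{i,j}$ translate to quantifier-free conditions on the parameters'' is precisely the unjustified step.

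Because your step (3) repeatedly invokes ``existential closure'' of $M_2$---both in the non-trivial case (to realise $\xi(f(a),x)$) and in the trivial case (to realise finite fragments of $o(x/f(A))$)---it presupposes that models of $T^*$ are existentially closed, i.e.\ it presupposes step (2). So the gap propagates.

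The paper sidesteps this entirely. It never tries to axiomatise the existentially closed models in advance. Instead it runs a back-and-forth between $\omega$-saturated models $M,N$ of $T_c$ (the theory of e.c.\ models), and in the non-trivial case uses \ssf{D1} not to describe a definable locus but to manufacture a single sentence $\A z[\theta(z)\to\E x\,\xi(z,x)]$ which, since it holds in every e.c.\ model, belongs to $T_c$ and therefore holds in $N$. That is the crucial move: one needs only that $N\models T_c$, not that $N$ is itself e.c. Your step (3) is essentially the paper's argument once you replace the appeals to ``$M_2$ is existentially closed'' by ``this sentence lies in $T_c$, hence holds in $M_2$''; with that change step (2) becomes unnecessary and the proof goes through.
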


\begin{proof}
Let $T_c$ be the theory of the existentially closed models of $T$ and let $M$ and $N$ be two $\omega$-saturated models of $T_c$, let $a$ and $d$ be finite tuples in $M$, respectively, $N$ and such that $M,a\equiv_{\rm qf}N,d$. Let $c$ be an element of $M$, we show that there is an $e$ such that  $M,a,c\equiv_{\rm qf}N,d,e$. Then elimination of quantifiers follows by back-and-forth. Let $p(z,x)=\eqntp(a,c)$, if $p(a,x)$ is trivial, then any $e$ in $N$ realizing $o(x/d)$ satisfies $M,a,c\equiv_{\rm qf}N,d,e$. In this case it suffices to observe that $\omega$-saturation ensures that $o(x/d)$ is realized in $N$.  Otherwise, since $p(a,x)$ is prime over $a$, by Observation~\ref{observation2}, there is an equational formula $\xi(z,x)$ such that $\vdash_{\!a}\xi(a,x)\iff p^\bullet(a,x)\wedge p(a,x)$. As $M,a\equiv_{\rm qf}N,d$, the same holds with $d$ substituted for $a$. Let $\theta(z)$ be as in \ssf{D1}. Then, for every $b$ that satisfies $\theta(z)$, the formula  $\xi(b,x)$ is consistent, so  $\xi(b,x)$ satisfied in any existentially closed model containing $b$. Then the formula $\A z[\theta(z)\imp\E x\,\xi(z,x)]$ belongs to $T_c$, so it holds in $N$. Finally, as $M,a\equiv_{\rm qf}N,d$ we obtain that $N\models\xi(d,e)$ for some $e$. As $\xi(d,x)$ is maximal over $d$, we obtain $M,a,c\equiv_{\rm qf}N,d,e$ as required.
\end{proof}

\begin{corollary}\label{stronglyminimal}
Let $T$ be a Krull-minimal theory. Then the model-companion of $T$ is a strongly minimal theory.
\end{corollary}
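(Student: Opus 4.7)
The plan is to combine the quantifier elimination of $T_c$ from Theorem~\ref{richsaturated} with the uniform finiteness statement of Theorem~\ref{minimal}. First I would check that $T_c$ is complete: since $T$ is by standing assumption complete for quantifier-free sentences and $T_c$ has the same universal consequences, $T_c$ also decides all quantifier-free sentences; combined with quantifier elimination this forces completeness.

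Next, fix a model $M\models T_c$, parameters $A\subseteq M$, and a formula $\phi(x)$ with parameters in $A$. I would invoke quantifier elimination to replace $\phi(x)$ by a quantifier-free formula, then put this formula in disjunctive normal form and group the positive and negated atomic literals in each disjunct into equational subformulas (using that $\neg(\alpha_1\vee\cdots\vee\alpha_k)$ is equivalent to $\neg\alpha_1\wedge\cdots\wedge\neg\alpha_k$). This rewrites $\phi$ as
\[
\bigvee_{i=1}^{m}\bigl(\zeta_i(x)\wedge\neg\eta_i(x)\bigr),
\]
where each $\zeta_i(x)$ and $\eta_i(x)$ is an equational $A$-formula.

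A short case analysis on each disjunct then finishes the pointwise claim. If $\zeta_i$ is non-trivial over $A$, Theorem~\ref{minimal} bounds the cardinality of the set defined by $\zeta_i$ in $M$, so the $i$-th disjunct is finite. If $\zeta_i$ is trivial, the disjunct reduces to $\neg\eta_i(x)$, which is either empty (when $\eta_i$ is trivial) or cofinite (when $\eta_i$ is non-trivial, again by Theorem~\ref{minimal}). A finite union of finite and cofinite sets is finite or cofinite, so the set defined by $\phi$ in $M$ is finite or cofinite. A standard compactness argument upgrades this pointwise dichotomy to a uniform bound on $\phi(x,\bar y)$ across all parameter tuples in all models of $T_c$, which, together with completeness of $T_c$, is strong minimality.

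The only subtlety worth flagging is the interplay between ``trivial over $A$'' as defined relative to $T$ and the semantics inside a $T_c$-model. Triviality in $T$ transfers to $T_c$ because every $T_c$-model is a $T$-model; and a formula non-trivial in $T$ defines a finite set in every $T$-model by Theorem~\ref{minimal}, so in the infinite models of $T_c$ its complement is automatically nonempty, making the dichotomy above clean. Beyond this bookkeeping step I do not foresee a real obstacle.
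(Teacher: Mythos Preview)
Your proposal is correct and follows exactly the route the paper intends: the paper's one-line proof simply cites Theorem~\ref{minimal} and Theorem~\ref{richsaturated} and concludes that every model of $T_c$ is minimal, and your argument just spells out the routine details (QE, DNF into $\zeta_i\wedge\neg\eta_i$, case split on triviality of $\zeta_i$) that the paper leaves implicit. The completeness and uniformity remarks you add are correct but not strictly needed once you know every model is minimal.
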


\begin{proof}
By Theorem~\ref{minimal} and~\ref{richsaturated} every model of the model-companion of $T$ is minimal.
\end{proof}

\section{Krull dimension}\label{noetherianity}

Let $p(z)$ be a consistent equational $A$-type. Define \emph{$\kdim(p(z)/A)$}, the \emph{Krull dimension\/} of $p(z)$ over $A$, as the maximal $n$ such that there are some prime equational $A$-types $p_0(z),\dots,p_n(z)$ such that $p_i(z)\vdash_{\!A} p_{i+1}(z)\nvdash_{\!A} p_i(z)$ and $p_n(z)\vdash_A p(z)$. Directly form the definition we obtain that, if $p(z)$ is a prime equational type, then $\kdim(p(z)/A)=0$ if and only if $p(z)$ is maximal. Then, by Fact~\ref{juisn}, for any equational type: $\kdim(p(z)/A)=0$ if and only if $p(z)$ is a disjunction of maximal types.

We want to prove that $\kdim(p(z)/A)$ is bounded by the length of $z$. First we introduce another natural dimension $\odim(p(z)/A)$. This is, roughly, the maximal degree of transcendence of a tuple satisfying $p(z)$. We will prove that $\kdim(p(z)/A)\le\odim(p(z)/A)$.

Recall that the consistency of $o(z/A)$ is equivalent to the primality over $A$ of the trivial type $z=z$, that is, to requiring the validity of the following implication for every pair of equational $A$-formulas $\phi(z)$ and $\psi(z)$:
\begin{itemize}
\item[d]\hfil$\vdash_{\!A}\phi(z)\vee\psi(z)\ \ \ \IMP\ \ \ \vdash_{\!A}\phi(z)\ \textrm{ or }\ \vdash_{\!A}\psi(z)$
\end{itemize}

If $b$ is a tuple that realizes $o(z/A)$, we say that \emph{$b$ is transcendental over $A$}. So, $b$ is transcendental over $A$ if for every equational $A$-formula $\vdash_{\!A,b}\phi(b)\iff\phi(z)$.

\begin{fact}\label{dp}
Let $T$ be a locally Krull-minimal theory and let $A$ be an arbitrary set of parameters. The following facts hold.
\begin{itemize}
\item[a] $o(z/A)$ is consistent for every tuple of variables $z$;
\item[b] $o(z/A)\subseteq o(z/B)$ whenever $A\subseteq B$;
\item[c] $o(z/A)$ is non-trivial whenever $A$ is non-empty or $z$ is a tuple of length $>1$.
\end{itemize}
\end{fact}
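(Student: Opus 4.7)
The plan is to dispose of (b) and (c) first, since they are short, and then invoke (b) inside an inductive proof of (a).

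For (b), given $A\subseteq B$ and $\neg\xi\in o(z/A)$, I would take a model $M\supseteq A$ with a tuple $c\in M$ refuting $\xi$, amalgamate $M$ with any $N\supseteq B$ over $A$ via Theorem~\ref{AP}, and note that in the resulting extension of $N$ the image of $c$ continues to refute the quantifier-free formula $\xi$; hence $\nvdash_B\xi$, so $\neg\xi\in o(z/B)$. For (c), I would exhibit an equational $A$-formula that is both consistent and non-valid: take $\xi:=(z_1=a)$ for some $a\in A$ when $A$ is non-empty, and $\xi:=(z_1=z_2)$ when $|z|>1$. In both cases $\xi$ and $\neg\xi$ are realized in some model, since every model embeds in an infinite one.

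For (a), I would induct on $|z|$. The base case $|z|=1$ is axiom \ssf{D0} when $A$ is finite; for arbitrary $A$, any putative failure of primality involves only finitely many parameters, so compactness reduces it to the finite case. For the inductive step at $|z|=n+1$, I would first realize $o(z_{\le n}/A)$ by a tuple $b_{\le n}$ in some $M\supseteq A$ (by the inductive hypothesis), then realize $o(z_{n+1}/M)$ by $b_{n+1}$ in some extension $M'\supseteq M$ (by the base case with parameter set $M$), and then verify that the joined tuple $(b_{\le n+1})$ realizes $o(z_{\le n+1}/A)$. Concretely, if $\xi(z_{\le n+1})$ is an equational $A$-formula with $M'\models\xi(b_{\le n+1})$, then transcendence of $b_{n+1}$ over $A,b_{\le n}$ yields $\vdash_{A,b_{\le n}}\xi(b_{\le n},z_{n+1})$; the task is then to upgrade this to $\vdash_A\xi$, contradicting non-validity of $\xi$.

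The main obstacle is this upgrade, which I would isolate as a sub-claim: if $b$ is a length-$n$ tuple transcendental over $A$ and $\theta(z,w)$ is an equational $A$-formula with $\vdash_{A,b}\theta(b,w)$, then $\vdash_A\theta(z,w)$. To prove it, fix $L\supseteq A$ and tuples $c,d\in L$ of the appropriate lengths and aim at $L\models\theta(c,d)$. Using the inductive hypothesis on the parameter set $A\cup\{d\}$ together with Theorem~\ref{AP}, produce a length-$n$ tuple $b^*$ transcendental over $A,d$ in some extension $L^*$ of $L$; by part (b), $b^*$ is also transcendental over $A$, so it shares its quantifier-free type over $A$ with $b$, and the hypothesis transfers to $\vdash_{A,b^*}\theta(b^*,w)$, whence $L^*\models\theta(b^*,d)$. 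The map $h\colon\langle A,d,b^*\rangle\to L$ that fixes $A\cup\{d\}$ pointwise and sends $b^*$ to $c$ is a well-defined homomorphism, because any atomic formula $\alpha(b^*)$ with parameters from $A,d$ satisfied in $L^*$ corresponds to an equational $A,d$-formula satisfied by $b^*$, which by transcendence of $b^*$ over $A,d$ is valid over $A,d$ and so holds with $b^*$ replaced by $c$. Since homomorphisms preserve positive quantifier-free formulas, $L\models\theta(c,d)$ follows.
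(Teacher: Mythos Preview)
Your argument is correct. Parts (b) and (c) match the paper's treatment. For part (a) you take a genuinely different route: the paper proves the primality condition \ssf{d} directly by induction, while you instead construct a realizing tuple coordinate by coordinate and then justify transcendence via a specialization argument.

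More precisely, in the paper's inductive step one assumes $\vdash_{\!A}\phi(x,z)\vee\psi(x,z)$, applies the induction hypothesis over $A,a$ for each single element $a$ to get $\vdash_{\!A,a}\phi(a,z)$ or $\vdash_{\!A,a}\psi(a,z)$, and then uses a one-variable transcendental over $A,b$ (from \ssf{D0}) together with compactness to conclude $\vdash_{\!A}\phi(x,z)$ or $\vdash_{\!A}\psi(x,z)$. Your inductive step instead isolates the sub-claim that validity over $A,b$ for a transcendental $b$ implies validity over $A$, and proves it by building a homomorphism from $\langle A,d,b^*\rangle$ onto an arbitrary $c$. This works because transcendence of $b^*$ over $A,d$ forces every atomic $A,d$-relation holding of $b^*$ to be universally valid, so the specialization map is indeed a homomorphism and pushes the equational formula $\theta$ forward.

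The paper's argument is shorter and stays entirely at the level of types, never leaving the syntactic primality formulation. Your argument is more semantic and yields as a by-product the standalone fact that a transcendental tuple admits a homomorphic specialization to every tuple over the same base---a concrete expression of genericity that the paper's proof does not surface. Both rely on amalgamation (through part (b)) and on \ssf{D0} via compactness for the one-variable case over infinite parameter sets.
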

\begin{proof}
To prove \ssf{a} we proceed by induction on the length of $z$. Assume that \ssf{d} above holds for tuple $z$ and prove it holds for the tuple $x,z$. Suppose $\vdash_{\!A}\phi(x,z)\vee\psi(x,z)$. Then, for $a$ arbitrary, $\vdash_{\!A,a}\phi(a,z)\vee\psi(a,z)$ so, from the induction hypothesis, either $\vdash_{\!A,a}\phi(a,z)$ or $\vdash_{\!A,a}\psi(a,z)$. Now, let $b$ be arbitrary and let $a$ be transcendental over $A,b$. Suppose for definiteness that $\vdash_{\!A,a}\phi(a,z)$ obtains. Then $o(x/A,b)\vdash_{\!A}\phi(x,z)$ so, by compactness, $\vdash_{\!A,b}\xi(x)\vee\phi(x,z)$ for some equational $A,b\,$-formula $\xi(x)$ such that $\nvdash_{\!A,b}\xi(x)$. Then,  $\vdash_{\!A,b}\xi(x)\vee\phi(x,b)$ so, by \ssf{D0}, either $\vdash_{\!A,b}\xi(x)$ or $\vdash_{\!A,b}\phi(x,b)$. The first is contrary to the choice of $\xi(x)$, so $\vdash_{\!A,b}\phi(x,b)$. Finally, by the arbitrarity of $b$, we conclude  $\vdash_{\!A}\phi(x,z)$.

Claim \ssf{b} is consequence of amalgamation, in fact, from Theorem~\ref{AP} it follows that any quantifier-free formula consistent over $A$ is consistent over any $B$ containing $A$. Finally, to prove \ssf{c} observe that $\nvdash z_1= z_2$ and that $\nvdash x=a$ for any $a\in A$.
\end{proof}

\textbf{\boldmath In the sequel we work over a Krull-minimal theory $T$ and by $A$ we denote an arbitrary set of parameters. Alternatively, one can assume that $T$ only locally Krull-minimal and that $A$ is finite. The letter $z$ denotes the tuple $z_0,\dots,z_{n-1}$.}

Let $I=\{i_1,\dots,i_k\}$ for some $0\le i_1<\dots<i_k<n$. We write $z_I$ for the tuple $z_{i_1},\dots,z_{i_k}$. Let $p(z)$ be a consistent $A$-type. Define \emph{$\odim(p(z)/A)$}, which we call the \emph{algebraic dimension\/} of $p(z)$ over $A$ to be the largest cardinality of some $I$ such that $o(z_I/A) \wedge p(z)$ is consistent. We agree that when $I$ is empty $o(z_I/A)$ is trivial so $\odim(p(z)/A)$ always defined and $0\le\odim(p(z)/A)\le n$.

\begin{observation}\label{maxdim}
Let $q(z)$ be an $A$-type and let $P$ be any set of $A$-types such that

\hfil$\displaystyle\vdash_{\!A}\ q(z)\ \iff \bigvee_{p(z)\in P}p(z)$.

Then $\odim(q(z)/A)=\max\{ \odim(p(z)/A)\ :\ p(z)\in P\}$. In particular, if $q(z)$ is equational, by Fact~\ref{juisn}, its dimension is the maximal dimension of a prime equational $A$-type $p(z)$ such that $p(z)\vdash_{\!A}q(z)$.\QED
\end{observation}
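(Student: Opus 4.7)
The plan is to unwind the hypothesis $\vdash_{\!A}q(z)\iff\bigvee_{p(z)\in P}p(z)$ as the assertion that, in every model containing $A$, a tuple $b$ realizes $q(z)$ if and only if it realizes some $p(z)\in P$. Once this is in hand, the equality of dimensions reduces to a direct inspection of the definition of $\odim$, since $\odim(\cdot/A)$ is defined purely in terms of which tuples realize the type.

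For the inequality $\odim(q(z)/A)\geq\odim(p(z)/A)$ for each $p(z)\in P$, I would fix $I$ witnessing $\odim(p(z)/A)\geq|I|$, pick a tuple $b$ realizing $o(z_I/A)\wedge p(z)$, and note that $b$ also realizes $q(z)$ (since $p(z)\vdash_{\!A}q(z)$ follows from the equivalence with the disjunction). Hence $o(z_I/A)\wedge q(z)$ is consistent, so $\odim(q(z)/A)\geq|I|$. Taking the max over $p(z)\in P$ yields one inequality. For the reverse, I would start from $I$ witnessing $\odim(q(z)/A)\geq|I|$, take $b$ realizing $o(z_I/A)\wedge q(z)$, and apply the hypothesis to obtain some $p(z)\in P$ realized by the same $b$; then $o(z_I/A)\wedge p(z)$ is consistent and $\odim(p(z)/A)\geq|I|$, so $\max\{\odim(p(z)/A):p(z)\in P\}\geq\odim(q(z)/A)$. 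Combining gives equality, and since the values all lie in $\{0,\dots,n\}$ the max is actually attained, even if $P$ is infinite.

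The final clause then follows at once from Fact~\ref{juisn}: for an equational $q(z)$, we may take $P$ to be the set of prime equational $A$-types $p(z)$ with $p(z)\vdash_{\!A}q(z)$ and apply what was just proved. There is really no substantive obstacle; the only point that needs attention is the conventional reading of the infinitary expression $\bigvee_{p(z)\in P}p(z)$ as a statement about realizations of types in models containing $A$, which is precisely how the notation was used in Fact~\ref{juisn}.
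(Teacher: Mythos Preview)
Your argument is correct and is exactly the routine verification the paper has in mind; the paper itself gives no proof for this observation (it is stated with a bare \QED), so there is nothing further to compare.
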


Now, let $p(z,w)$ be an arbitrary $A$-type. Define

\hspace*{14ex}\llap{\emph{$p(z,\mbox{-})$}}\ \ :=\ \ $\Big\{\phi(z)\ :\ \phi(z)\textrm{ is an equational }A\textrm{-formula and }  p(z,x)\vdash_{\!A}\phi(z)\Big\}$.

Note the dependency on $A$, however, as $A$ will be always clear from the context, we omit it from the notation. Note also that $p(z,\mbox{-})$ is by definition an equational type, independently of the complexity of $p(z,w)$.

\begin{lemma}\label{lksihn}
Let $p(z)$ be a non-trivial equational $A$-type. Let $I\subseteq\{0,\dots,n-1\}$ be a set of cardinality $\odim(p(z)/A)$ such that $o(z_I/A) \wedge p(z)$ is consistent. Then 

\hfil$\displaystyle o(z_I/A)\ \vdash_{\!A}\ p(z)\iff\bigvee^k_{h=1}\xi_h(z)$ 

for some equational $A$-formulas $\xi_h(z)$ such that $o(z_I/A) \wedge\xi_h(z)$ is maximal over $A$.
\end{lemma}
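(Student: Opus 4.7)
The plan is to reduce to the single-variable Fact~\ref{nonprime} by localizing at a generic realization of $z_I$. I first dispose of the degenerate case $|J|:=n-|I|=0$: then $o(z/A)$ is a maximal $A$-type in all variables (it decides every equational $A$-formula, implying $\neg\phi$ for each non-trivial equational $\phi$), so its consistency with non-trivial $p$ gives a contradiction, and the lemma is vacuous. For $|J|\ge 1$, I pick $b$ realizing $o(z_I/A)$, which exists by Fact~\ref{dp}.a, and set $B:=A\cup b$. Consider $p(b,z_J)$ as an equational $B$-type in the $|J|$ variables $z_J$. It has $\odim(p(b,z_J)/B)=0$: any non-empty $I'\subseteq J$ with $o(z_{I'}/B)\wedge p(b,z_J)$ consistent would yield (by combining with $b$) a realization of $o(z_{I\cup I'}/A)\wedge p(z)$, contradicting $\odim(p(z)/A)=|I|$ since $|I\cup I'|>|I|$. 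And $p(b,z_J)$ is non-trivial over $B$ by an analogous argument: if it were trivial, automorphism-transfer from $b$ would give $o(z_I/A)\vdash_A p(z)$, forcing $p$ trivial.

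The heart of the argument is an auxiliary multi-variable extension of Fact~\ref{nonprime}: any non-trivial equational $B$-type $q(z_J)$ with $\odim(q/B)=0$ is equivalent over $B$ to a finite disjunction of maximal equational $B$-formulas. I would prove this by induction on $|J|$, the base case $|J|=1$ being Fact~\ref{nonprime} directly. For the inductive step, I decompose $q$ via Fact~\ref{juisn} into prime equational types, each still with $\odim=0$, then project to a sub-tuple $z_{J'}$ and apply the induction hypothesis to $q(z_{J'},-)$; for each resulting maximal formula $\mu_l(z_{J'})$ I apply Fact~\ref{nonprime} to the slice $q(d,z_j)$ for $d\models\mu_l$; combining and using compactness extracts finitely many formulas $\tilde\xi_h(b,z_J)$ maximal over $B$ with $p(b,z_J)\equiv\bigvee_h\tilde\xi_h(b,z_J)$.

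Finally, writing $\xi_h(z):=\tilde\xi_h(z_I,z_J)$ as equational $A$-formulas, the equivalence and maximality statements transfer from $b$ to every realization of $o(z_I/A)$ by automorphism invariance: by Fact~\ref{observation*}.b applied to the trivial type (which is prime over $A$ by D0 and Fact~\ref{dp}.a), $o(z_I/A)$ is a maximal $A$-type, so all its realizations share the same complete quantifier-free type over $A$. This yields $o(z_I/A)\vdash_{\!A}\,p(z)\iff\bigvee_h\xi_h(z)$ with each $o(z_I/A)\wedge\xi_h(z)$ maximal over $A$. The main obstacle is the inductive step of the auxiliary claim: the projection $q(z_{J'},-)$ need not automatically preserve $\odim=0$, and uniformly extending generic realizations of $\mu_l$ to realizations of $q$ will likely require axiom~D1 to control the consistency of slices across parameter specializations.
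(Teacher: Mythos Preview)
Your approach is correct and is essentially a reorganization of the paper's argument. The paper also reduces to Fact~\ref{nonprime} one variable at a time, via the observation that if $q(z)$ is a maximal type and $q(z)\nvdash_{\!A} p(z,x)$, then Fact~\ref{nonprime} (applied over a realization of $q$) decomposes $p(z,x)$ into finitely many pieces each maximal together with $q$. The only organizational difference is that the paper keeps $z_I$ as free variables constrained by $o(z_I/A)$ and grows a set $J\supseteq I$ one coordinate at a time, whereas you substitute a realization $b$ for $z_I$, prove a standalone $\odim=0$ decomposition over $B=A\cup b$, and transfer back at the end. The parametric version saves that final transfer step but is otherwise the same induction.

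Your two stated obstacles are not real. First, once you have restricted to a \emph{prime} $q$ with $\odim(q/B)=0$, the projection does preserve $\odim=0$: for each coordinate $i$, the inconsistency of $o(z_i/B)\wedge q$ together with primality of $q$ forces $q\vdash_{\!B}\xi_i(z_i)$ for some non-trivial equational $\xi_i$; this formula already belongs to any projection $q(z_{J'},-)$ with $i\in J'$, so $o(z_i/B)\wedge q(z_{J'},-)$ is inconsistent too. Second, axiom \ssf{D1} is not needed for uniformity across realizations of $\mu_l$: since $\mu_l$ is maximal over $B$, any two realizations $d,d'$ share the same complete quantifier-free type over $B$, so the decomposition of the slice $q(d,z_j)$ given by Fact~\ref{nonprime} transfers verbatim to $q(d',z_j)$. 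The paper's proof likewise uses only \ssf{D3} (through Fact~\ref{nonprime}) at this step.
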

\begin{proof}
Observe that by Fact~\ref{nonprime}, if $q(z)$ is maximal and $q(z)\nvdash_{\!A} p(z,x)$ then 

\hfil$\displaystyle q(z)\ \vdash_{\!A}\ p(z,x)\iff\bigvee^k_{h=1}\xi_h(z,x)$

for some equational formulas $\xi_h(z,x)$ such that $q(z)\wedge\xi_h(z,x)$ is maximal. This will be used below. Now, let $I$ be as in the statement of the lemma and let $J$ be maximal such that $I\subseteq J\subseteq \{0,\dots,n-1\}$ and

\hfil$\displaystyle o(z_I/A)\ \vdash_{\!A}\ p(z_J,\mbox{-})\iff\bigvee^k_{h=1}\xi_h(z_J)$ 

for some for some equational formulas $\xi_h(z_J)$ such that $o(z_I/A) \wedge\xi_h(z_J)$ is maximal. Note that if we take $I=J$ then $p(z_J,\mbox{-})$ is a consequence of $o(z_I/A)$ and the requirement is satisfied with $\xi_h(z_J)$ trivial. So the required $J$ exists. The lemma follows if we show that $J=\{0,\dots,n-1\}$. Suppose not and let $m\in\{0,\dots,n-1\}\sm J$. Let $1\le h\le k$ be arbitrary. We claim that $o(z_I/A) \wedge\xi_h(z_J)\ \nvdash_{\!A}\ p(z_J,z_m,\mbox{-})$: if not $p(z_J,z_m,\mbox{-})$ would be consistent with $o(z_I,z_m/A)$. Then also $p(z)$ would be consistent with $o(z_I,z_m/A)$ contradicting the maximality of $I$. So we can apply the observation above

\hfil$\displaystyle o(z_I/A) \wedge\xi_h(z_J)\ \vdash_{\!A}\ p(z_J,z_m,\mbox{-})\iff\bigvee^{k_h}_{l=1}\xi_{h,l}(z_J,z_m)$

for some equational formulas $\xi_{h,l}(z)$ such that $o(z_I/A)\wedge\xi_h(z_J) \wedge\xi_{h,l}(z_J,z_m)$ is maximal. Then it follows that

\hfil$\displaystyle o(z_I/A)\ \vdash_{\!A}\ p(z_J,z_m,\mbox{-})\iff\bigvee^k_{h=1}\bigvee^{k_h}_{l=1}\xi_h(z_J) \wedge\xi_{h,l}(z_J,z_m)$

This contradicts the maximality of $J$ proving the lemma.
\end{proof}

\begin{fact}
Suppose $A$ is non-empty. Let $p(z)$ be a non-trivial equational $A$-type. Then the following are equivalent
\begin{itemize} 
\item[a] $\odim(p(z)/A)=0$;
\item[b] $p(z)$ is a disjunction of finitely many maximal equational formulas;
\item[c] $p(z)$ is a disjunction of possibly infinitely many maximal equational types.
\end{itemize}
\end{fact}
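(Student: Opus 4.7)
The strategy is to prove the cycle (a) $\Rightarrow$ (b) $\Rightarrow$ (c) $\Rightarrow$ (a).

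The implication (a) $\Rightarrow$ (b) is immediate from Lemma \ref{lksihn}: since $\odim(p/A)=0$, we may take $I=\emptyset$ there, and because $o(z_\emptyset/A)$ is the trivial type, the conclusion simplifies to $\vdash_{\!A}p(z)\iff\bigvee^k_{h=1}\xi_h(z)$ with each $\xi_h(z)$ maximal over $A$. The implication (b) $\Rightarrow$ (c) is immediate, since every maximal equational formula generates a principal maximal equational type and a finite disjunction is a special case of a possibly infinite one.

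For (c) $\Rightarrow$ (a), write $\vdash_{\!A}p(z)\iff\bigvee_{i}q_i(z)$ with each $q_i$ a maximal equational type. Non-triviality of $p$ forces each $q_i$ to be non-trivial, and by Observation \ref{maxdim} one has $\odim(p/A)=\sup_i\odim(q_i/A)$. So the implication reduces to the sub-claim that every non-trivial maximal equational type $q$ over a non-empty $A$ satisfies $\odim(q/A)=0$. The reduction I would use is the following: it is enough to show that for each coordinate $j$ the one-variable restriction $q|_{\{j\}}:=\{\phi(z_j)\,:\,q\vdash_{\!A}\phi(z_j)\}$ is non-trivial. Primeness of $q|_{\{j\}}$ is directly inherited from $q$, so by Observation \ref{notstrongly} and axiom \ssf{D3} the type $q|_{\{j\}}$ is then maximal and principal; let $\xi_j(z_j)\in q$ isolate it. Then for every non-empty $I$, pick $j\in I$: we have $\xi_j(z_j)\in q$ and $\neg\xi_j(z_j)\in o(z_I/A)$, so $o(z_I/A)\wedge q$ is inconsistent, giving $\odim(q/A)=0$.

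The main obstacle is therefore showing $q|_{\{j\}}$ non-trivial for each $j$. I would argue by contradiction: suppose $q|_{\{j\}}$ is trivial, and pick some $a\in A$, which exists by hypothesis. The formula $z_j=a$ is non-tautological; by maximality $q$ decides it, and triviality of $q|_{\{j\}}$ rules out $q\vdash_{\!A}z_j=a$, forcing $q\vdash_{\!A}z_j\ne a$. I would then derive a contradiction by producing a realization of $q$ whose $j$-th coordinate is $a$. Concretely, starting from any realization $b$ of $q$ and exploiting the triviality of $q|_{\{j\}}$---which says that no equational consequence of $q$ in the single variable $z_j$ excludes the value $a$---one would argue via the amalgamation property (Theorem \ref{AP}) and the consistency statements \ssf{D1}--\ssf{D2} that $q^\circ\cup\{z_j=a\}$ is finitely satisfiable, so that it has a realization, contradicting $q\vdash_{\!A}z_j\ne a$. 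Checking that no multi-variable equational consequence of $q$ is violated by this substitution, using the maximality of $q$ in an essential way, is the delicate step.
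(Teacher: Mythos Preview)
Your implications \ssf{a}$\IMP$\ssf{b} and \ssf{b}$\IMP$\ssf{c} are exactly the paper's, and your reduction of \ssf{c}$\IMP$\ssf{a} via Observation~\ref{maxdim} to the sub-claim ``for a maximal equational $A$-type $q$, each one-variable projection $q|_{\{j\}}$ is non-trivial'' is also the paper's reduction (the paper writes $q(z_i,\mbox{-})$ for your $q|_{\{i\}}$).

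The gap is in your attempted proof of that sub-claim. You assume $q$ maximal and $q|_{\{j\}}$ trivial, correctly deduce $q\vdash_{\!A} z_j\neq a$, and then propose to contradict this by exhibiting a realization of $q^\circ\cup\{z_j=a\}$. This cannot work. Since $q$ is an equational type, $q^\circ$ is equivalent to $q$ itself; and you have just shown $q\vdash_{\!A} z_j\neq a$, so $q\cup\{z_j=a\}$ is outright inconsistent. By compactness there is a single equational consequence $\psi(z)$ of $q$ with $\psi(z)\wedge (z_j=a)$ inconsistent---so the ``delicate step'' you flag is not a technical verification but a false statement under your hypotheses. Triviality of $q|_{\{j\}}$ guarantees only that no \emph{one-variable} equational consequence excludes $z_j=a$; maximality of $q$ guarantees that some \emph{multi-variable} one does. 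The tools you invoke (amalgamation, \ssf{D1}, \ssf{D2}) speak to extending consistent data, and here the data is already inconsistent.

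The paper's route to the contradiction is different in emphasis: rather than trying to realize $q$ with a prescribed $j$-th coordinate, it argues that the projection $q(z_i,\mbox{-})$ is itself a \emph{maximal} equational type in the single variable $z_i$; being also consistent with $o(z_i/A)$, it is trivial, whence $o(z_i/A)$ is equivalent to the trivial type and hence itself trivial, contradicting Fact~\ref{dp}\ssf{c}. In other words, the key point is that ``maximal'' and ``trivial'' are incompatible for one-variable equational types over a non-empty $A$, so one should aim at maximality of $q|_{\{j\}}$ directly rather than at producing realizations.
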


\begin{proof}
To prove \ssf{a}$\IMP$\ssf{b}, suppose $\odim(p(z)/A)=0$. As $o(z_I/A)$ is trivial when $I$ is empty, Lemma~\ref{lksihn} implies that $p(z)$ is a disjunction a maximal formulas. The implication \ssf{b}$\IMP$\ssf{c} is trivial. To prove \ssf{c}$\IMP$\ssf{a}, assume \ssf{c} and suppose for a contradiction that $o(z_i/A)\wedge p(z)$ is consistent for some $0\le i<n$. Then $o(z_i/A)$ is consistent with some maximal equational type $q(z)\vdash p(z)$. Then $q(z_i,\mbox{-})$ is maximal equational type consistent with $o(z_i/A)$. It follows that $q(z_i,\mbox{-})$ is trivial. Then also $o(z_i/A)$, which is equivalent to it, is trivial. This cannot be by \ssf{c} of Fact~\ref{dp}.
\end{proof}

\begin{theorem}\label{decrease}
Let $p(z)$ and $q(z)$ be non-trivial equational $A$-types such that $q(z)\vdash_{\!A} p(z)\nvdash_{\!A} q(z)$ and assume that $p(z)$ is prime. Then $\odim(q(z)/A)<\odim(p(z)/A)$.
\end{theorem}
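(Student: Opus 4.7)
The plan is to argue by contradiction. Since $q\vdash_{\!A}p$, any realization of $o(z_I/A)\wedge q$ realizes $o(z_I/A)\wedge p$, so $\odim(q/A)\le\odim(p/A)$ is automatic; assuming equality, set $m=\odim(p/A)=\odim(q/A)$ and fix $I\subseteq\{0,\dots,n-1\}$ of cardinality $m$ with $o(z_I/A)\wedge q$ consistent. By the same observation $I$ also witnesses $\odim(p/A)=m$.

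Apply Lemma~\ref{lksihn} to $p$ and $I$ to obtain equational $A$-formulas $\xi_1(z),\dots,\xi_k(z)$ with each $o(z_I/A)\wedge\xi_h(z)$ maximal and $o(z_I/A)\vdash_{\!A}p(z)\iff\bigvee_h\xi_h(z)$. Compactness lets us replace $o(z_I/A)$ by the conjunction of finitely many of its members, producing a global implication $p(z)\vdash_{\!A}\bigvee_h\xi_h(z)\,\vee\,\bigvee_j\phi_j(z_I)$, where each $\phi_j(z_I)$ is equational with $\nvdash_{\!A}\phi_j(z_I)$. Iterated primality of $p$ then yields $p\vdash_{\!A}\alpha$ for some single disjunct $\alpha$; the alternative $\alpha=\phi_{j_0}(z_I)$ would make $o(z_I/A)\wedge p$ inconsistent, contradicting the choice of $I$, so $p\vdash_{\!A}\xi_{h_0}$ for some $h_0$.

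Now pick an equational formula $\phi\in q$ with $p\nvdash_{\!A}\phi$, which exists because $p\nvdash_{\!A}q$. Any realization $b$ of $o(z_I/A)\wedge q$ satisfies $p$, hence $\xi_{h_0}$, so $b$ realizes the maximal type $o(z_I/A)\wedge\xi_{h_0}$; this maximal type is thereby consistent with $q$ and, by maximality, derives $q$ and hence $\phi$. Together with $p\vdash_{\!A}\xi_{h_0}$ this gives $o(z_I/A)\wedge p\vdash_{\!A}\phi$, and running the same compactness-plus-primality argument a second time produces either $p\vdash_{\!A}\phi$ (contradicting the choice of $\phi$) or $p\vdash_{\!A}\phi'(z_I)$ for some non-trivial $\phi'(z_I)$ (again contradicting that $I$ witnesses $\odim(p/A)=m$). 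The main conceptual point is that primality of $p$, combined with the maximality property of $I$, lets one upgrade any derivation under $o(z_I/A)$ to a global derivation via compactness; once this trick is in hand, the argument is essentially two applications of it.
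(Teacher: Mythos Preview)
Your argument is correct and follows essentially the same route as the paper's proof. The only difference is cosmetic: where the paper observes in one line that $o(z_I/A)\wedge p(z)$ is itself prime (and hence, via Lemma~\ref{lksihn}, maximal), you unfold that observation into an explicit compactness-plus-primality step extracting a single $\xi_{h_0}$ with $p\vdash_{\!A}\xi_{h_0}$; from that point on the two arguments coincide.
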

\begin{proof}
Clearly $\odim(q(z)/A)\le\odim(p(z)/A)$. Suppose for a contradiction that equality holds. Fix $I$ of cardinality $\odim(q(z)/A)$ such that $o(z_I/A)\wedge q(z)$ is consistent. Then $o(z_I/A)\wedge p(z)$ is also consistent. It is easy to check that $o(z_I/A)\wedge p(z)$ is prime so, by Lemma~\ref{lksihn}, it is maximal. Let $\phi(z)$ be an equational formula in $q(z)$ such that $p(z)\nvdash_{\!A}\phi(z)$. As $o(z_I/A)\wedge p(z)\wedge\phi(z)$ is consistent, by maximality, $o(z_I/A)\wedge p(z)\vdash_{\!A}\phi(z)$, so $p(z)\vdash_{\!A}\phi(z)\vee\psi(z_I)$ where $\psi(z_I)$ is an equational formula whose negation is in $o(z_I/A)$. By primality, either $p(z)\vdash_{\!A}\phi(z)$ or $p(z)\vdash_{\!A}\psi(z_I)$. The first possibility contradicts the choice of $\phi(z)$, the second contradicts the consistency of $o(z_I/A)\wedge p(z)$.
\end{proof}

\begin{corollary}\label{finitekrull}
From the theorem it follows that $\kdim(p(z)/A)\le\odim(p(z)/A)$, so equational types have a finite Krull-dimension.\QED
\end{corollary}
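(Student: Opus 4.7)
The plan is straightforward: iterate Theorem \ref{decrease} along a chain witnessing $\kdim(p(z)/A)$. Explicitly, if $p_0(z), p_1(z), \ldots, p_n(z)$ are prime equational $A$-types with $p_i \vdash_A p_{i+1}$, $p_{i+1} \nvdash_A p_i$, and $p_n \vdash_A p$, I want to conclude that $n \le \odim(p(z)/A)$.

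The first building block is a monotonicity property of $\odim$: if $q(z) \vdash_A r(z)$ are consistent equational $A$-types, then $\odim(q/A) \le \odim(r/A)$. This is immediate from the definition of $\odim$, since any set $I$ making $o(z_I/A) \wedge q(z)$ consistent also makes $o(z_I/A) \wedge r(z)$ consistent. Next, I would observe that every $p_i$ with $i < n$ is automatically non-trivial: if $p_k$ were trivial for some $k < n$, then $p_k \vdash_A p_{k+1}$ would force $p_{k+1}$ trivial too, contradicting $p_{k+1} \nvdash_A p_k$.

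Assuming also that $p_n$ is non-trivial, Theorem \ref{decrease} applies to each consecutive pair $(p_i, p_{i+1})$ and yields $\odim(p_0/A) < \odim(p_1/A) < \cdots < \odim(p_n/A)$. Combined with $\odim(p_0/A) \ge 0$ and monotonicity applied to $p_n \vdash_A p$, this gives $n \le \odim(p_n/A) \le \odim(p(z)/A)$, as required. The remaining edge case is when $p_n$ is itself trivial, which forces $p$ trivial and $\odim(p(z)/A)$ equal to the length of $z$. In that situation Theorem \ref{decrease} still applies to the first $n-1$ pairs and yields $\odim(p_{n-1}/A) \ge n-1$; a short direct argument then finishes, noting that a tuple realizing a non-trivial equational formula cannot be transcendental over $A$, so $o(z/A) \wedge p_{n-1}(z)$ is inconsistent and hence $\odim(p_{n-1}/A)$ is strictly less than the length of $z$. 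The main obstacle is only this small bookkeeping for the trivial case; the substantive work has already been done in Theorem \ref{decrease}, and the corollary is essentially a telescoping application of it.
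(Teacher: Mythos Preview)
Your proposal is correct and follows exactly the approach the paper intends: the corollary is stated with no proof beyond the \QED, so the intended argument is precisely the telescoping application of Theorem~\ref{decrease} that you spell out. Your explicit treatment of the edge case where $p_n$ is trivial is more careful than the paper, but the underlying idea is the same.
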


\section{Final remarks and questions}

The main question is whether (locally) Krull-minimal theories are (locally) Noetherian this meaning that every equational $A$-types $p(z)$ is principal (where $A$ is finite in the local case). A second question is whether Krull and algebraic dimension agree.

\begin{theorem}\label{k=o}
Suppose that, for any set of parameters $B\supseteq A$, there is no equational type such that $q(z,x)\vdash_{\!B} o(x/B)$. Then $\kdim(p(z)/A)=\odim(p(z)/A)$ for every equational $A$-type $p(z)$.
\end{theorem}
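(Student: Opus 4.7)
The target is the reverse inequality $\odim(p(z)/A)\le\kdim(p(z)/A)$; the other direction is Corollary~\ref{finitekrull}. By Fact~\ref{juisn} with Observation~\ref{maxdim}, $p$ is equivalent to a disjunction of primes whose maximal algebraic dimension equals $\odim(p)$, so fixing a prime $q\vdash_{\!A} p$ with $\odim(q)=\odim(p)$ I may assume $p$ itself is prime. The plan is then, for prime $p$ with $\odim(p)=m$, to produce a chain of primes $p_0\vdash_{\!A} p_1\vdash_{\!A}\ldots\vdash_{\!A} p_m=p$ with $\odim(p_i)=i$ and each step strict. This is built by iterating the following \emph{sub-lemma} downwards from $p$: if $p$ is a prime equational $A$-type with $\odim(p)=m\ge 1$, then there is a prime equational $A$-type $p'$ with $p'\vdash_{\!A} p\nvdash_{\!A} p'$ and $\odim(p')=m-1$. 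Iteration stops at dimension $0$, where by the unlabelled fact following Lemma~\ref{lksihn} the type $p_0$ is automatically a maximal equational type.

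To prove the sub-lemma, fix $I$ of cardinality $m$ with $o(z_I/A)\wedge p(z)$ consistent and let $b$ realize it. Pick $i_0\in I$, set $I'=I\sm\{i_0\}$ and $B=A\cup\{b_i:i\in I'\}$. I first observe that $b_{i_0}$ is transcendental over $B$: any non-trivial equational $B$-formula satisfied by $b_{i_0}$ would be of the form $\tilde\eta(b_{I'},x)$ for some equational $A$-formula $\tilde\eta$, and $\tilde\eta$ would be non-trivial over $A$ (else $\tilde\eta(b_{I'},x)$ would be $B$-trivial), giving $b_I$ a non-trivial $A$-equation contrary to the choice of $b$. Next let $r(w,x)$ be the equational $B$-type of the sub-tuple of $b$ with the $x$-slot filled by $b_{i_0}$ and the $w$-slots filled by the $b_j$ for $j\notin I$. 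Apply the hypothesis of the theorem to $B$ and $r$: from $r\nvdash_B o(x/B)$ we obtain a non-trivial equational $B$-formula $\xi(x)=\tilde\xi(b_{I'},x)$, where $\tilde\xi(z_{I'},x)$ is an equational $A$-formula (necessarily non-trivial over $A$ by the same pull-back argument), such that $r(w,x)\wedge\xi(x)$ is consistent over $B$.

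Let $(c,c')$ realize $r\wedge\xi$ in some extension of $B$ and assemble $b'$ by placing $c'$ in slot $i_0$, $b_i$ in slots $i\in I'$, and the components of $c$ in the remaining slots. Because $(c,c')$ realizes $r$, every equational $A$-formula of $p$, regarded after fixing the $I'$-slots to $b_{I'}$ as a $B$-formula in $r$, is satisfied by $b'$; hence $b'$ realizes $p$, and $\tilde\xi(b'_{I'},b'_{i_0})$ also holds. Set $p':=\eqntp(b'/A)$: by Fact~\ref{observation*}.\ssf{c} this is prime, it contains $p$, and it contains $\tilde\xi$. Strictness $p'\supsetneq p$ holds because the realization $b$ of $p$ does not satisfy $\tilde\xi$, since $b_{i_0}$ is transcendental over $B$ while $\tilde\xi(b_{I'},x)$ is non-trivial over $B$. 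Finally, Theorem~\ref{decrease} forces $\odim(p')<m$, and $b'$ realizes $o(z_{I'}/A)\wedge p'(z)$, so $\odim(p')\ge m-1$; hence $\odim(p')=m-1$.

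The main obstacle is precisely this use of the theorem's hypothesis to extract $\xi$. Without it one cannot rule out that the prime equational type of the ``remaining'' coordinates of $b$ over $B$ syntactically forces the $x$-slot to stay transcendental, and then there is no way to specialize $b_{i_0}$ while preserving $b_{I'}$, which is what is needed to drop the algebraic dimension by exactly one at each step of the chain.
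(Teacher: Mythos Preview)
Your argument is correct and follows essentially the same strategy as the paper: reduce to a prime $p$, then use the hypothesis to specialize one transcendental coordinate while keeping the remaining ones transcendental, thereby dropping $\odim$ by exactly one. The only difference is cosmetic: you realize the specialized tuple $b'$ and take $p'=\eqntp(b'/A)$ to obtain a \emph{prime} type at each step, building the chain of primes explicitly, whereas the paper is content to produce the not-necessarily-prime type $p'\wedge\psi(z_I,z_i)$ and then invokes the inductive hypothesis on its Krull dimension together with the definition of $\kdim$ (which only requires $p_n\vdash_{\!A}p$, not that $p$ itself appear in the chain). Your indexing $(I,I',i_0)$ corresponds to the paper's $(I\cup\{i\},I,i)$.
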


\begin{proof}
By Corollary~\ref{finitekrull} we have $\kdim(p(z)/A)\le\odim(p(z)/A)$, so we only need to prove the converse inequality. As observed in~\ref{maxdim}, there is a prime type such that $p'(z)\vdash_{\!A}p(z)$ and $\odim(p'(z)/A)=\odim(p(z)/A)$. As clearly $\kdim(p'(z)/A)\le\kdim(p(z)/A)$, it suffices to prove the inequality $\odim(p'(z)/A)\le\kdim(p'(z)/A)$. We suppose $\odim(p'(z)/A)=m+1$ and show that there is an equational $A$-type $q(z)$ such that $q(z)\vdash_{\!A}p'(z)\nvdash_{\!A}q(z)$ and $\odim(q(z)/A)=m$. The theorem follows by induction. Let $I\subseteq\{0,\dots,n-1\}$ and $i\in\{0,\dots,n-1\}\sm I$ be such that $o(z_I,z_i/A)$ is consistent with $p'(z)$. Observe that $o(z_I/A)\wedge p'(z)\nvdash_{\!A} o(z_I,z_i/A)$. This follows from the hypothesis above after replacing $z_I$ with some parameters $b_I$. Then there is an equational $A$-formula $\psi(z_I,z_i)$ such that $\nvdash_{\!A}\psi(z_I,z_i)$ and $o(z_I/A)\wedge p'(z)\wedge \psi(z_I,z_i)$ is consistent. Then $m\le \odim(p'(z)\wedge \psi(z_I,z_i)/A)$. As $ p'(z)$ is consistent with $o(z_I,z_i/A)$ while $p'(z)\wedge \psi(z_I,z_i)$ is not, then $p'(z)\nvdash\psi(z_I,z_i)$. So $p'(z)\wedge \psi(z_I,z_i)\vdash_{\!A}p'(z)\nvdash_{\!A}p(z)\wedge \psi(z_I,z_i)$ and from Lemma~\ref{decrease} we obtain $\odim(p'(z)\wedge \psi(z_I,z_i)/A)< \odim(p'(z)/A)$. So $\odim(p'(z)\wedge \psi(z_I,z_i)/A)=m$ as required.
\end{proof}

When $T$ is Noetherian and the model-companion of $T$ is not $\omega$-categorical the hypothesis of theorem above is satisfied. In fact, suppose $\zeta(z,x)\vdash_{\!B} o(x/B)$ for some equational $B$-formula and let $U$ be an existentially closed saturated model containing $B$. By elimination of quantifier there is a pair of equational $B$-formulas $\phi(x)$ and $\psi(x)$ such that $\phi(x)\wedge\neg\psi(x)\imp o(x/B)$ holds in $U$. Observe that $\phi(x)$ has to be trivial, otherwise $o(x/B)$ would contain $\neg\phi(x)$. Then $o(x/B)$ is principal and, together with \ssf{D3}, this implies that there are only finitely many quantifier-free $B$-types in $x$.


\vfil
Domenico Zambella\\
Dipartimento di Matematica\\
Universit\`a di Torino\\
via Carlo Alberto 10\\
10123 Torino\\
Italy
\end{document}